\documentclass[11pt]{article}

\usepackage[a4paper,margin=1in]{geometry}
\usepackage{amsmath,amsthm,amsfonts,amssymb}
\usepackage{mathtools}
\usepackage{tikz}
\usepackage{graphicx}
\usepackage{booktabs}
\usepackage{caption}
\usepackage{placeins}
\usepackage{hyperref}
\hypersetup{hidelinks}
\usepackage{url}

\newcommand{\NN}{\mathbb{N}}
\newcommand{\ZZ}{\mathbb{Z}}
\newcommand{\QQ}{\mathbb{Q}}
\newcommand{\eps}{\varepsilon}

\theoremstyle{plain}
\newtheorem{theorem}{Theorem}
\newtheorem{lemma}{Lemma}
\newtheorem{proposition}{Proposition}

\newtheorem{conjecture}{Conjecture}

\theoremstyle{definition}
\newtheorem{definition}{Definition}

\theoremstyle{remark}
\newtheorem{remark}{Remark}

\title{\bf No-three-in-line sets on the checkerboard grid}
\author{Thomas Prellberg\smallskip\\
\small School of Mathematical Sciences\\
\small Queen Mary University of London\\ 
\small United Kingdom\smallskip\\
\small \texttt{t.prellberg@qmul.ac.uk}}
\date{\today}

\begin{document}
\maketitle

\begin{abstract}
The classical no-three-in-line problem asks for the largest number $D(n)$ of points that can be
chosen from an $n\times n$ grid with no three collinear.  It remains open whether the elementary
upper bound $2n$ is always attainable.  We study a checkerboard-restricted variant in which all
chosen points must lie in one fixed parity class of the colouring by $x+y \pmod 2$.  If
$D_{\mathrm{mono}}(n)$ denotes the corresponding single-parity optimum, then the slope-$\pm1$
diagonals already give the elementary bound $D_{\mathrm{mono}}(n)\le 2n-2$.

The main object of the paper is a four-direction linear-programming relaxation on a fixed parity
class, using rows, columns, and the two diagonal families of slopes $\pm1$.  For the ordinary
square-grid problem this relaxation gives the trivial bound.  On the checkerboard it gives
substantially tighter finite bounds in the computed range.  For $2\le n\le 16$ its floor agrees with
the exact single-parity optimum except at four side lengths, where the gap is one.  After symmetry
reduction, the dual relaxation has three one-dimensional reduced forms, according to the parity of
$n$ and the chosen colour class; their computed optimizers display structured piecewise-profile
patterns.

The central rigorous result is a continuum dual certificate for the formal continuum problem associated
with the scaled symmetry-reduced odd-fat case.  We construct explicit nonnegative functions $A$ and
$B$ satisfying the continuum obstacle inequalities and having objective value \(\alpha\), where
\[
401\alpha^3-1744\alpha^2+2240\alpha-768=0
\]
and \(\alpha\) is the middle real root.
This proves the upper bound $\Lambda_{\mathrm{fat}}\le\alpha$ for the odd-fat continuum relaxation.
Finite LP computations are consistent with $\alpha$ as a limiting slope, and the exact small-$n$ data
suggest, more speculatively, that the true checkerboard optimum may track the same scale.  We state
the corresponding limiting interpretation for the discrete LP as a conjecture, and formulate the
analogous statement for $D_{\mathrm{mono}}(n)$ as a stronger conjecture.
\end{abstract}

\section{Introduction}

The no-three-in-line problem, introduced by Dudeney in recreational form and later developed in the
combinatorics literature, asks for the maximum number $D(n)$ of points that can be selected from the
$n\times n$ integer grid so that no three lie on a common line
\cite{dudeney1917,guy1968,hall1975}.  The pigeonhole principle gives the elementary upper bound
$D(n)\le 2n$.  The problem of attaining this bound has been studied extensively.  See, for
example,
\cite{adena1974,anderson1979,flammenkamp1992,flammenkamp1998,harborth1989,kloeve1979,prellberg2026}
and the broader accounts in \cite{bmp2005,eppstein2018}.

Several neighbouring variants are close enough to be worth separating from the present one.  Finite-torus
and other modular versions ask for collinearity modulo a finite abelian group
\cite{fowler2012,kuwong2018,skotnica2019}; permutation or transversal versions study related
collinearity restrictions in permutation-like sets \cite{cooperhyatt2025,coopersolymosi2005}; and
higher-dimensional grid versions have also been studied \cite{porwood2007}.  Other nearby problems
concern minimum or extensible no-three-in-line sets
\cite{aichholzer2023,cooperpikhurko2014,nagy2023}, as well as no-$(k+1)$-in-line generalisations
\cite{kovacs2025}.  For arbitrary input point sets, the general-position subset selection problem is
studied both combinatorially and algorithmically
\cite{balogh2024,dumitrescu2025,froese2017,furedi1991,paynewood2013}.

This paper studies a parity-restricted version of the same problem.  Write
$[n]_0:=\{0,1,\dots,n-1\}$ and colour $[n]_0^2$ by the parity of $x+y$.  The two colour classes
$C_0$ and $C_1$ are defined formally in Definition~\ref{def:classes}.  For a fixed colour
$\eps\in\{0,1\}$, let $D_{\mathrm{mono}}(n,\eps)$ be the largest size of a no-three-in-line set
contained in $C_\eps$, and put
\[
D_{\mathrm{mono}}(n)=\max_{\eps\in\{0,1\}}D_{\mathrm{mono}}(n,\eps).
\]
When $n=2m+1$ is odd, the two colour classes have sizes $2m^2+2m+1$ and $2m^2+2m$.  We call the
larger one the \emph{fat} class and the smaller one the \emph{thin} class.  For even $n$ the two
classes have equal size.  The adjective ``monochromatic'' is used here only with respect to this
fixed checkerboard colouring: a feasible set is required to lie in one of the two parity classes
$C_0,C_1$.  This is different from colouring the whole grid so that every colour class has the
no-three-in-line property, as in Wood's grid-colouring problem \cite{wood2004}, and from
graph-theoretic position-set colourings \cite{chandran2025}.  The author has not found a prior
published treatment of the present square-grid parity-class maximisation problem.  The neighbouring
colouring and general-position references above provide context, but no result from them is used in
the proof of the continuum certificate below.

The checkerboard restriction is motivated by a simple obstruction to extending maximal
configurations.  Suppose that a $2n$-point no-three-in-line set on an $n\times n$ grid is dilated by
a factor of two and placed in the $2n\times2n$ grid.  Since a $2n$-point configuration uses exactly
two points in every row and every column of the original grid, the dilated configuration already
saturates all even rows and all even columns of the larger grid.  Any additional points in a putative
completion to a $4n$-point configuration would therefore have to lie in odd rows and odd columns.
Thus all added points would lie in a single checkerboard colour class.  Single-parity
no-three-in-line sets are therefore one natural finite-grid test case for possible obstructions to
such completions.

There is also an immediate capacity obstruction.  In one checkerboard class, every diagonal of
slope $+1$ and every diagonal of slope $-1$ is monochromatic.  Each such diagonal can therefore
contain at most two selected points, and counting the capacity of one diagonal family gives
\[
D_{\mathrm{mono}}(n)\le 2n-2.
\]
Section~\ref{sec:capacity} proves this bound.  A boundary-forcing mechanism suggests, but does not
prove, the sharper inequality $D_{\mathrm{mono}}(n)\le 2n-4$ for $n\ge6$.  Appendix~\ref{app:boundary-forcing}
records the heuristic local forcing mechanism behind this expectation.

The main tool in the paper is the four-direction linear-programming relaxation
$L_{\mathrm{mono}}(n,\eps)$.  This relaxation replaces the integral choice of points in $C_\eps$ by a
nonnegative fractional mass assignment and keeps the capacity-two constraints on rows, columns,
and the two diagonal families of slopes $\pm1$.  Hence
\[
D_{\mathrm{mono}}(n,\eps)\le L_{\mathrm{mono}}(n,\eps).
\]
For the ordinary square-grid no-three-in-line problem, the analogous relaxation gives the trivial
bound $2n$.  On the checkerboard, however, the same four directions become much more
informative in the computations reported here.  Separate exact search for $2\le n\le16$ shows that
the floor of the LP optimum agrees with $D_{\mathrm{mono}}(n,\eps)$ except in four side lengths, where the
gap is one.

The dual form of this LP is particularly useful.  A dual solution assigns nonnegative weights to
rows, columns, and diagonals, with the requirement that every point of the chosen colour class is
covered by total weight at least one.  Any such weighting is an explicit upper-bound certificate.  By
averaging over the symmetries preserving the chosen colour class, the dual problem reduces to
one-dimensional line-weight profiles.  The three cases are odd side length with the fat class, odd
side length with the thin class, and even side length.  The computed reduced LP optimizers display a
stable piecewise-profile pattern and point numerically to a common linear scale
\[
L_{\mathrm{mono}}(n,\eps)\approx \alpha n,
\qquad
\alpha\approx 1.5768.
\]
At the small sizes where exact search is feasible, the true values $D_{\mathrm{mono}}(n,\eps)$ are
consistent with the same scale.

The main rigorous result of the paper is an exact continuum upper-bound certificate at this constant
in the cleanest of the three symmetry types, namely the odd-fat case.  A formal scaling of the
reduced variables leads to a continuum obstacle problem.  One seeks nonnegative functions
$A,B$ on $[0,1]$ minimizing
\[
4\int_0^1 (A(t)+B(t))\,dt
\]
subject to
\[
A(x)+A(1-y)+B(x+y)+B(x-y)\ge 1
\]
on the triangle
\[
T=\{(x,y):0\le y\le x,\ x+y\le1\}.
\]
Theorem~\ref{thm:odd-fat-continuum-certificate}, proved in
Subsection~\ref{subsec:odd-fat-continuum}, constructs explicit piecewise quadratic and linear
functions $A$ and $B$ satisfying these inequalities.  Their objective value is the middle real root
$\alpha$ of
\[
401\alpha^3-1744\alpha^2+2240\alpha-768=0.
\]
The obstacle inequality is verified by an exact algebraic Bernstein-basis certificate over a finite
subdivision of the continuum domain.

The construction proves the rigorous upper bound $\Lambda_{\mathrm{fat}}\le\alpha$ for the odd-fat
continuum relaxation.  It does not by itself prove any discrete asymptotic theorem.  The finite LP
data support the conjecture that $L_{\mathrm{mono}}(n,\eps)/n$ tends to $\alpha$ in all three symmetry
types.  The exact small-$n$ data suggest the same scale for the original monochromatic
no-three-in-line problem.  We formulate these discrete asymptotic statements as conjectures following
the reduced LP evidence.

The paper is organised as follows.  Section~\ref{sec:notation} fixes the basic notation and parity
terminology.  Section~\ref{sec:capacity} gives the elementary diagonal bound.  Section~\ref{sec:lp} introduces
the four-direction LP relaxation and its dual certificate interpretation.  Proposition~\ref{prop:symmetry-reduced-duals} records the three symmetry-reduced dual
programs in Subsection~\ref{subsec:lp-reduced-cases}.  Subsection~\ref{subsec:odd-fat-continuum} states and verifies the odd-fat continuum certificate.
Appendix~\ref{app:boundary-forcing} records the boundary-forcing heuristic, Appendix~\ref{app:profile-curvature}
records finite profile-curvature data, and Appendix~\ref{app:certificate-derivation} gives the active-patch ansatz calculation behind the continuum certificate.
A final reproducibility note describes the companion certificate package.

\section{Definitions and notation}\label{sec:notation}

\begin{definition}[Checkerboard colour classes]\label{def:classes}
Let $n\in\NN$ and let $G_n:=[n]_0^2\subset\ZZ^2$.
For $\eps\in\{0,1\}$ define the parity (checkerboard) classes
\[
C_\eps:=\{(x,y)\in G_n:\ x+y\equiv \eps \pmod 2\}.
\]
We will refer to $C_0$ and $C_1$ as the two \emph{colours}.  The labels ``black'' and ``white'' are
immaterial.
\end{definition}

\begin{definition}[No-three-in-line (NTIL) and monochromatic optima]\label{def:ntil}
A finite set $S\subset\ZZ^2$ is \emph{no-three-in-line} (NTIL) if no three points of $S$ are collinear.
For fixed $n\in\NN$ and $\eps\in\{0,1\}$, define
\[
D_{\mathrm{mono}}(n,\eps):=
\max\{|S|:\ S\subseteq C_\eps \text{ is NTIL}\}.
\]
We then set
\[
D_{\mathrm{mono}}(n)=\max_{\eps\in\{0,1\}}D_{\mathrm{mono}}(n,\eps).
\]
\end{definition}

The term ``monochromatic'' in Definition~\ref{def:ntil} is therefore shorthand for
``contained in one prescribed parity class of the fixed checkerboard colouring''.  It should not be
read as a statement about arbitrary colourings of the grid or as a graph-colouring condition.

For $c\in\ZZ$ we write
\[
D^{+}_c:=\{(x,y)\in G_n:\ x-y=c\},\qquad
D^{-}_c:=\{(x,y)\in G_n:\ x+y=c\},
\]
for the diagonals of slope $+1$ and $-1$ respectively (intersected with the grid).

\begin{lemma}[Slope-$\pm1$ diagonals are monochromatic]\label{lem:diag-mono}
Each diagonal $D_c^+$ and $D_c^-$ lies entirely in a single parity class.
\end{lemma}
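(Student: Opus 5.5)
The plan is to compute the parity of $x+y$ directly from the defining equation of each diagonal; no earlier result is needed beyond Definition~\ref{def:classes} and the definitions of $D_c^{\pm}$.

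\textbf{Anti-diagonals $D_c^-$.} These are immediate. Every point $(x,y)\in D_c^-$ satisfies $x+y=c$. Hence $x+y\equiv c\pmod 2$, and so $D_c^-\subseteq C_{\eps}$ with $\eps\equiv c\pmod 2$.

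\textbf{Diagonals $D_c^+$.} Here the defining equation is $x-y=c$. The key observation is the congruence
\[
x+y=(x-y)+2y\equiv x-y\pmod 2 .
\]
So every $(x,y)\in D_c^+$ has $x+y\equiv c\pmod 2$, and therefore $D_c^+\subseteq C_\eps$ with $\eps\equiv c \pmod 2$. If $c$ is negative, we simply reduce it to its residue in $\{0,1\}$.

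Since each diagonal is by definition intersected with $G_n$, it is a subset of $G_n$. It therefore lies entirely in the single class $C_\eps$ identified above. Empty diagonals satisfy the statement vacuously.

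There is no real obstacle here. The only point needing care is the congruence $x-y\equiv x+y \pmod 2$ for the slope $+1$ family, together with noting that the statement refers to the index $c$ modulo $2$, including negative $c$.
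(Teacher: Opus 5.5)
Your proposal is correct and follows essentially the same argument as the paper: for $D_c^-$ the sum $x+y=c$ is constant, and for $D_c^+$ one has $x+y\equiv (x-y)+2y\equiv c \pmod 2$. Your remarks about negative $c$ and empty diagonals are harmless extra care that the paper leaves implicit.
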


\begin{proof}
Along $D_c^+$ one has $x-y=c$, so $x+y\equiv 2y+c\equiv c\pmod 2$.  Along $D_c^-$ one has $x+y=c$ identically.  In either case the parity of $x+y$ is constant along the diagonal.
\end{proof}

\begin{remark}[Fat and thin classes]\label{rem:fat-thin}
For odd $n=2m+1$, one colour class has $2m^2+2m+1=(n^2+1)/2$ points and the other has $2m^2+2m=(n^2-1)/2$ points.
We call the larger class the \emph{fat} class and the smaller one the \emph{thin} class.  The labels $C_0$ and $C_1$ depend on which corner colour is chosen.
For even $n$ the two colour classes have the same cardinality, so this terminology is unnecessary.
\end{remark}

\section{Elementary diagonal bound}\label{sec:capacity}

The first obstruction comes from Lemma~\ref{lem:diag-mono}.  Inside a fixed colour class, every diagonal of slope $+1$ and every diagonal of slope $-1$ is monochromatic.

\begin{proposition}[Diagonal capacity]\label{prop:capacity-total}
Fix $n\ge 2$ and $\eps\in\{0,1\}$.  The parity class $C_\eps$ has total capacity $2n-2$ on the
diagonals of slope $+1$.  Consequently every monochromatic NTIL set $S\subseteq C_\eps$ satisfies
\[
|S|\le 2n-2.
\]
\end{proposition}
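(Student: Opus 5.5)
Since every point of $S$ lies on exactly one diagonal $D_c^+$, we have $|S|=\sum_c |S\cap D_c^+|$, summed over those $c$ with $D_c^+\cap C_\eps\neq\emptyset$. A line contains at most two points of an NTIL set, so each term is at most $\min(2,|D_c^+|)$. The plan is therefore to compute the \emph{capacity}
\[
\kappa_\eps:=\sum_{c} \min\bigl(2,|D_c^+\cap C_\eps|\bigr)
\]
and show that $\kappa_\eps\le 2n-2$. That gives $|S|\le\kappa_\eps\le 2n-2$ at once. Lemma~\ref{lem:diag-mono} is what makes the sum over a single colour class meaningful: each diagonal $D_c^+$ lies entirely in $C_\eps$ or entirely in $C_{1-\eps}$. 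In fact $D_c^+\subseteq C_\eps$ exactly when $c\equiv\eps\pmod 2$, so
\[
\kappa_\eps=\sum_{c\equiv\eps\ (2)} \min\bigl(2,|D_c^+|\bigr).
\]

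Next I count. For $|c|\le n-1$ the diagonal $D_c^+$ has $n-|c|$ points, so $\min(2,|D_c^+|)$ equals $2$ for $|c|\le n-2$ and equals $1$ for $c=\pm(n-1)$. The $c$ with $|c|\le n-1$ and $c\equiv\eps$ form an arithmetic progression of step $2$, and the endpoints $\pm(n-1)$ belong to it exactly when $n-1\equiv\eps\pmod 2$. There are two cases.

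In the first case, $n-1\equiv\eps$, there are $n$ admissible values of $c$. Two of them are the corner diagonals, which have capacity $1$, and the other $n-2$ have capacity $2$. So $\kappa_\eps=2(n-2)+2=2n-2$.

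In the second case, $n-1\not\equiv\eps$, the admissible $c$ lie in $|c|\le n-2$ with $c\equiv n\pmod 2$. There are $n-1$ of them, all with capacity $2$, so $\kappa_\eps=2n-2$.

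In both cases $\kappa_\eps=2n-2$, which is the stated capacity, and the bound on $|S|$ follows. The only points needing care are the parity bookkeeping in the two cases and the check that $n\ge2$ is used correctly. When $n=2$ in the first case, the middle diagonals number $n-2=0$, and the formula still holds.
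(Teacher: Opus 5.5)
Your proof is correct and matches the paper's argument. Like the paper, you partition $S$ by the slope-$+1$ diagonals, bound each diagonal's contribution by $\min(2,|D_c^+|)$, and split into two cases: either $C_\eps$ meets $n$ diagonals (two corner singletons plus $n-2$ diagonals of capacity two), or it meets $n-1$ diagonals, all of capacity two. You also state the parity bookkeeping explicitly via $D_c^+\subseteq C_\eps \iff c\equiv\eps \pmod 2$, which the paper leaves implicit.
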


\begin{proof}
Along a diagonal of slope $+1$, the parity of $x+y$ is constant.  If $C_\eps$ meets $n$ such diagonals,
then two are singleton diagonals and the remaining $n-2$ have NTIL capacity two, giving total capacity
$1+1+2(n-2)=2n-2$.  If $C_\eps$ meets $n-1$ such diagonals, each has capacity two, again giving
$2n-2$.
\end{proof}

\begin{remark}[Boundary forcing heuristic]\label{rem:forcing-bound}
For $n\ge 6$, the local saturation pattern behind Proposition~\ref{prop:capacity-total} appears to rule
out both $2n-2$ and $2n-3$ monochromatic points, although the argument recorded here is not a
complete proof.  This suggests the sharpened bound
\[
D_{\mathrm{mono}}(n)\le 2n-4 \qquad (n\ge6).
\]
Appendix~\ref{app:boundary-forcing} records the forcing mechanism as motivation for this expected
sharpening.  The LP and continuum arguments below are independent of this heuristic.
\end{remark}

\begin{remark}[The exceptional case $n=5$]\label{rem:n5}
The case $n=5$ is exceptional.  The monochromatic set
\[
\{(0,1),(0,3),(1,0),(1,4),(3,0),(3,4),(4,1),(4,3)\}
\]
has $8=2n-2$ points and is NTIL.
\end{remark}

\section{A four-direction linear programming bound}\label{sec:lp}

We next replace the exact monochromatic optimisation problem by a four-direction fractional relaxation.
In addition to the diagonal families $D_c^+$ and $D_c^-$, let
\[
H_y:=\{(x,y)\in G_n:\ x\in [n]_0\},
\qquad
V_x:=\{(x,y)\in G_n:\ y\in [n]_0\},
\]
denote the horizontal and vertical grid lines.

The usual exact integer-programming formulation for NTIL or general-position subset selection may
include one constraint for every grid line containing at least three available points, or equivalently
for every collinear triple \cite{froese2017,prellberg2026}.  The relaxation below is deliberately
weaker: it keeps only the four line families $0$, $\infty$, and $\pm1$.  Its usefulness comes from the
checkerboard restriction and from the explicit dual certificates, not from any general claim that four
directions control the full no-three-in-line problem.

Every monochromatic NTIL set satisfies all of the line-capacity constraints below, so these four line
families still produce a genuine relaxation of the combinatorial problem.  Instead of choosing points integrally, we assign a nonnegative
variable $z_p$ to each lattice point $p\in C_\eps$.  One should think of $z_p$ as a fractional amount of
mass placed at $p$.  We then enforce the capacity-two constraints on rows, columns, and diagonals of
slope $\pm1$.  For fixed $n$ and $\eps$, let
$L_{\mathrm{mono}}(n,\eps)$ be the optimum of
\[
\max \sum_{p\in C_\eps} z_p
\]
subject to
\[
\sum_{p\in H_y\cap C_\eps} z_p \le 2
\qquad (y\in [n]_0),
\]
\[
\sum_{p\in V_x\cap C_\eps} z_p \le 2
\qquad (x\in [n]_0),
\]
\[
\sum_{p\in D_c^+\cap C_\eps} z_p \le 2
\qquad (c\in \ZZ),
\]
\[
\sum_{p\in D_c^-\cap C_\eps} z_p \le 2
\qquad (c\in \ZZ),
\]
and $z_p\ge 0$ for all $p\in C_\eps$.

\begin{proposition}\label{prop:lp-four-direction}
For every $n\ge 2$ and $\eps\in\{0,1\}$,
\[
D_{\mathrm{mono}}(n,\eps)\le L_{\mathrm{mono}}(n,\eps).
\]
Moreover, by linear programming duality,
\[
L_{\mathrm{mono}}(n,\eps)
=
\min
2\Biggl(
\sum_{y=0}^{n-1} h_y
\;+\;
\sum_{x=0}^{n-1} v_x
\;+\;
\sum_{c=-(n-1)}^{n-1} \lambda_c^+
\;+\;
\sum_{c=0}^{2n-2} \lambda_c^-
\Biggr),
\]
where the minimum is taken over nonnegative weights
\[
h_y,\ v_x\ge 0,\qquad
\lambda_c^+\ge 0\ \ (-(n-1)\le c\le n-1),\qquad
\lambda_c^-\ge 0\ \ (0\le c\le 2n-2)
\]
satisfying
\[
h_y+v_x+\lambda_{x-y}^+ + \lambda_{x+y}^- \ge 1
\qquad\text{for every }(x,y)\in C_\eps.
\]
\end{proposition}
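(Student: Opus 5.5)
The plan has two parts: the relaxation inequality, then the duality formula.

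\emph{Relaxation inequality.} Take an NTIL set $S\subseteq C_\eps$ with $|S|=D_{\mathrm{mono}}(n,\eps)$ and let $z$ be its indicator vector, $z_p=1$ if $p\in S$ and $z_p=0$ otherwise. Every row $H_y$, column $V_x$, and diagonal $D_c^\pm$ is contained in a line. An NTIL set meets any line in at most two points, so each of the four capacity constraints holds for $z$, and trivially $z\ge0$. Hence $z$ is feasible and $\sum_p z_p=|S|\le L_{\mathrm{mono}}(n,\eps)$. The LP is bounded, since every variable lies in some row and so $0\le z_p\le 2$, and it is feasible via $z=0$. Therefore the optimum exists and is finite.

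\emph{Duality formula.} Write the primal as $\max\{\mathbf 1^\top z: Mz\le 2\cdot\mathbf 1,\ z\ge0\}$. Here $M$ is the $0/1$ incidence matrix whose rows are indexed by lines (rows $y$, columns $x$, and diagonals $c$) and whose columns are indexed by points of $C_\eps$. Standard LP duality gives the dual $\min\{2\cdot\mathbf 1^\top w: M^\top w\ge\mathbf 1,\ w\ge 0\}$. The constraint $(M^\top w)_p\ge 1$ for $p=(x,y)$ sums the weights of exactly the four lines through $p$, namely $H_y$, $V_x$, $D^+_{x-y}$ and $D^-_{x+y}$. This gives exactly $h_y+v_x+\lambda^+_{x-y}+\lambda^-_{x+y}\ge1$.

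Since the primal is feasible and bounded, strong duality applies: the dual optimum is attained and equals $L_{\mathrm{mono}}(n,\eps)$.

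\emph{Bookkeeping.} The one point needing care is matching the index ranges. The primal ranges over all $c\in\ZZ$, whereas the dual lists $-(n-1)\le c\le n-1$ for slope $+1$ and $0\le c\le 2n-2$ for slope $-1$. Diagonals outside these ranges are empty in $G_n$, so their constraints are vacuous, and dropping them changes neither program.

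Some listed diagonals, or the parity-mismatched halves of rows, may contain no points of $C_\eps$. Their dual variables then appear in the objective but in no covering constraint, so an optimal dual solution sets them to $0$. Including these variables therefore does not change the minimum.

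There is no real obstacle in this argument. The only step requiring attention is this index bookkeeping, which ensures the displayed dual is literally the LP dual up to such harmless zero variables.
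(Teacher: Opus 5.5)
Your proposal is correct and follows the same route as the paper: the indicator vector of an optimal NTIL set is primal feasible, and the displayed program is the standard LP dual of the packing problem, with the index ranges justified because all other diagonals miss the grid. Your remarks on boundedness, attainment via strong duality, and the zero-valued dual variables for diagonals of the wrong parity make explicit what the paper leaves to its citation of Schrijver. One small correction to those remarks: for $n\ge2$ every row and column meets $C_\eps$, so only diagonals can be empty.
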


\begin{proof}
If $S\subseteq C_\eps$ is NTIL, then its characteristic vector is feasible for the primal linear program,
because every horizontal line, vertical line, and diagonal of slope $\pm 1$ contains at most two points
of $S$. Hence $|S|\le L_{\mathrm{mono}}(n,\eps)$, and maximising over all such sets gives
$D_{\mathrm{mono}}(n,\eps)\le L_{\mathrm{mono}}(n,\eps)$.
The displayed dual formulation is the standard dual of the primal packing problem; see, for example,
Schrijver~\cite{schrijver1998}.
The relevant diagonal offsets are $-(n-1),\dots,n-1$ for $D_c^+$ and $0,\dots,2n-2$ for $D_c^-$,
because all other diagonals miss the grid.  Thus the dual is a finite linear program.
\end{proof}
The dual variables have a simple interpretation.  The weights $h_y$, $v_x$, $\lambda_c^+$, and
$\lambda_c^-$ are assigned to rows, columns, and diagonals, and the constraint
\[
h_y+v_x+\lambda_{x-y}^+ + \lambda_{x+y}^- \ge 1
\]
means that every admissible lattice point $(x,y)\in C_\eps$ is covered with total weight at least $1$.
Any feasible choice of these dual weights is therefore an \emph{upper-bound certificate}, because its
objective value is automatically an upper bound for $L_{\mathrm{mono}}(n,\eps)$, and hence for
$D_{\mathrm{mono}}(n,\eps)$ as well.

This line-cover language is close to, but distinct from, LP/ILP viewpoints in broader
General Position Subset Selection work.  Cao formulated a related line-cover dual problem in his
thesis, and Froese, Kanj, Nichterlein and Niedermeier discuss an ILP formulation for General
Position Subset Selection that is dual to an ILP for Point Line Cover \cite{cao2012,froese2017}.
The exact rows/columns/slope-$\pm1$ checkerboard relaxation and the continuum certificate below are
specific to the present problem; the novelty claimed here is not LP duality in the abstract.

The quantity $L_{\mathrm{mono}}(n,\eps)$ ignores all slopes other than $0$, $\infty$ and $\pm 1$.
For the classical square-grid no-three-in-line problem, the analogous four-direction relaxation collapses
to the trivial upper bound $2n$.  Indeed, the constant fractional assignment $z_{x,y}=2/n$ is primal
feasible, while the dual solution assigning weight $1/2$ to each row and each column, and weight $0$ to
the two diagonal families, has the same value.
For the checkerboard problem, however, the same relaxation is substantially more informative in the
computed range.  The exact values $D_{\mathrm{mono}}(n,\eps)$ reported below were obtained by direct exhaustive
search for $2\le n\le 16$, while the LP values were obtained by solving the corresponding primal
linear programs.  These finite computations are evidence only; they are not used in the proof of the
continuum certificate.

\begin{table}[ht]
\centering
\small
\begin{tabular}{ccccc}
\toprule
$n$ & $L_{\mathrm{mono}}(n,0)$ & $D_{\mathrm{mono}}(n,0)$ & $L_{\mathrm{mono}}(n,1)$ & $D_{\mathrm{mono}}(n,1)$\\
\midrule
2  & 2.000 & 2  & 2.000 & 2 \\
3  & 4.000 & 4  & 4.000 & 4 \\
4  & 6.000 & 6  & 6.000 & 6 \\
5  & 7.200 & 7  & 8.000 & 8 \\
6  & 9.000 & 8  & 9.000 & 8 \\
7  & 10.667 & 10 & 10.667 & 10 \\
8  & 12.333 & 12 & 12.333 & 12 \\
9  & 14.133 & 14 & 13.714 & 13 \\
10 & 15.556 & 15 & 15.556 & 15 \\
11 & 17.120 & 16 & 17.000 & 16 \\
12 & 18.750 & 18 & 18.750 & 18 \\
13 & 20.267 & 20 & 20.364 & 20 \\
14 & 22.000 & 21 & 22.000 & 21 \\
15 & 23.478 & 23 & 23.500 & 23 \\
16 & 25.091 & 24 & 25.091 & 24 \\
\bottomrule
\end{tabular}
\caption{Comparison of the four-direction LP bound $L_{\mathrm{mono}}(n,\eps)$ with the exact checkerboard NTIL maxima $D_{\mathrm{mono}}(n,\eps)$ for $2\le n\le 16$.}
\label{tab:lp-smalln}
\end{table}

In the computed range $2\le n\le 16$, using the exact LP optima rather than the rounded values displayed in
Table~\ref{tab:lp-smalln}, we find
\[
\lfloor L_{\mathrm{mono}}(n,\eps)\rfloor = D_{\mathrm{mono}}(n,\eps)
\]
for all cases other than $n\in\{6,11,14,16\}$.  At those four side lengths, both parity classes satisfy
\[
\lfloor L_{\mathrm{mono}}(n,\eps)\rfloor = D_{\mathrm{mono}}(n,\eps)+1.
\]
Thus, in the computed range, the four-direction LP is close to the exact checkerboard values.  The
remaining LP gap shows that these four line families alone do not determine the exact NTIL optimum
in every case.

\subsection{Three symmetry-reduced dual LPs}\label{subsec:lp-reduced-cases}

The generic dual in Proposition~\ref{prop:lp-four-direction} hides an important distinction.
There are three symmetry types.  For odd side length the full dihedral symmetry preserves each
colour class, whereas for even side length a $90^\circ$ rotation swaps the two colours and therefore
is unavailable in a fixed one-colour optimisation problem.  The following proposition records the
resulting reduced dual programs.  The variables in these programs are orbit-averaged versions of the
row, column, and diagonal weights in Proposition~\ref{prop:lp-four-direction}.

The reduction is an averaging argument in the dual.  Since the objective and constraints are invariant
under the colour-preserving symmetries, an optimal dual weighting may be chosen constant on line
orbits.  The work is to list these orbits and their objective coefficients in the three parity cases.

\begin{proposition}[Symmetry-reduced dual programs]\label{prop:symmetry-reduced-duals}
The four-direction dual LP may be reduced as follows.

\begin{enumerate}
\item \textbf{Odd side length, fat class.}
Let $n=2m+1$ and consider the fat class $C_0$.  Then
\[
L_{\mathrm{mono}}(2m+1,0)=\min \Phi_m^{\mathrm{fat}}(a,b),
\]
where
\[
\Phi_m^{\mathrm{fat}}(a,b)
:=
8\sum_{i=0}^{m-1}(a_i+b_i)+4(a_m+b_m),
\]
the minimum is over nonnegative $a_0,\dots,a_m$ and $b_0,\dots,b_m$, and the constraints are
\[
a_u+a_{m-v}+b_{u+v}+b_{u-v}\ge 1
\qquad (0\le v\le u,\ u+v\le m).
\]

\item \textbf{Odd side length, thin class.}
Let $n=2m+1$ and consider the thin class $C_1$.  Then
\[
L_{\mathrm{mono}}(2m+1,1)=\min \Phi_m^{\mathrm{thin}}(a,b),
\]
where
\[
\Phi_m^{\mathrm{thin}}(a,b)
:=
8\sum_{i=0}^{m-1}a_i+8\sum_{i=0}^{m-1}b_i+4b_m,
\]
the minimum is over nonnegative $a_0,\dots,a_{m-1}$ and $b_0,\dots,b_m$, and the constraints are
\[
a_u+a_{m-v-1}+b_{u+v+1}+b_{u-v}\ge 1
\qquad (0\le v\le u,\ u+v\le m-1).
\]

\item \textbf{Even side length.}
Let $n=2m$ and fix either colour class, say $C_0$.  Then
\[
L_{\mathrm{mono}}(2m,0)=\min \Phi_m^{\mathrm{even}}(a,b,c),
\]
where
\[
\Phi_m^{\mathrm{even}}(a,b,c)
:=
8\sum_{i=0}^{m-1}a_i+4\sum_{i=0}^{m-1}b_i+2c_0+4\sum_{j=1}^{m-1}c_j,
\]
the minimum is over nonnegative $a_0,\dots,a_{m-1}$, $b_0,\dots,b_{m-1}$ and
$c_0,\dots,c_{m-1}$, and the constraints are
\[
a_{u-v}+a_{\min(u+v,\,2m-1-u-v)}+b_u+c_v\ge 1
\qquad (0\le v\le u\le m-1).
\]
The other parity class has the same optimum by reflection.
\end{enumerate}
\end{proposition}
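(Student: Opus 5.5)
The plan is to prove each of the three identities in two directions.

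For the inequality $L_{\mathrm{mono}}\le\min\Phi$, take any feasible reduced vector. Lift it to a full dual weighting in Proposition~\ref{prop:lp-four-direction} by giving every line the reduced variable of its orbit. Then check two things: every point constraint becomes one of the listed reduced constraints, and the full objective becomes $\Phi$.

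For the inequality $L_{\mathrm{mono}}\ge\min\Phi$, take an optimal dual weighting $w$ and average it over the group $\Gamma$ of grid symmetries that preserve $C_\eps$. This means setting $\bar w=|\Gamma|^{-1}\sum_{g\in\Gamma} w\circ g$. Each $g\in\Gamma$ permutes rows, columns and diagonals, preserving the set of lines and the set of points of $C_\eps$. So $\bar w$ is still feasible, since each covering constraint is an average of feasible constraints, and it has the same objective value. Moreover $\bar w$ is constant on $\Gamma$-orbits of lines, so it comes from a reduced vector, which gives the reverse inequality. Diagonals that miss $C_\eps$ (odd offset in the fat case, for instance) appear in no constraint, so we may set their weights to zero. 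Thus the whole proof reduces to an orbit computation in each of the three cases.

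\textbf{Odd $n=2m+1$.} Centre the grid by $X=x-m$, $Y=y-m\in[-m,m]$. Then $\Gamma$ is the full dihedral group $D_4$, because $2m$ is even and so $X+Y$ has the same parity as $x+y$. Rows and columns lie in one family of orbits $\{Y=\pm k, X=\pm k\}$. Such an orbit has size $4$ for $k\ge1$ and size $2$ for $k=0$. I would index it by $i=m-k$, which produces the coefficients $2\cdot4=8$ for $i<m$ and $2\cdot2=4$ for $i=m$.

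Diagonals meeting the fat class have even offsets $X\mp Y=\pm2j$. They form orbits of size $4$ for $j\ge1$ and size $2$ for $j=0$, giving the $b$-coefficients in the same way. For the thin class, the diagonal offsets are odd and rows/columns with $k=0$ carry no constraint-relevant asymmetry. This shifts the indices and removes $a_m$: the top $a$-index disappears and the $b$ orbits are re-indexed by $u+v+1$ and $u-v$.

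Next I would pick a fundamental domain for the $D_4$-action on points of $C_\eps$. A convenient choice is to use the $45^\circ$-rotated coordinates $s=(X+Y)/2$, $t=(X-Y)/2$ in the fat case. In these coordinates the fat class is the integer diamond and the fundamental domain is $0\le v\le u$, $u+v\le m$. For each representative I would read off the orbit indices of its row, column and two diagonals. This should give exactly $a_u+a_{m-v}+b_{u+v}+b_{u-v}$. The remaining work is the index-matching check that the chosen labelling of $a_i,b_i$ is consistent with the stated coefficients.

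\textbf{Even $n=2m$.} Here a $90^\circ$ rotation (and also the reflection in a vertical axis, which changes parity of $x$ by $n-1$ odd) swaps colours. So $\Gamma$ is the subgroup generated by the half-turn and the reflections in the two main diagonals. Rows and columns are still exchanged, via the transpose, and the half-turn pairs row $y$ with row $n-1-y$. This gives orbits of size $4$ (coefficient $8$, variables $a_i$).

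Each diagonal family is now fixed setwise, so the two families split into separate orbit types, $b$ and $c$. The main diagonal family has one orbit of size $1$ at offset $0$ (coefficient $2c_0$) and otherwise orbits of size $2$. The antidiagonals are all paired by the half-turn, giving coefficient $4$.

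The main obstacle is this case. The fundamental domain $0\le v\le u\le m-1$ does not map the row index linearly, and a point's row/column indices must be folded back via $i\mapsto\min(i,2m-1-i)$. That folding is precisely the $\min(u+v,2m-1-u-v)$ in the constraint. I would verify it by writing $x=u+v$-type coordinates explicitly and checking the two regimes $u+v\le m-1$ and $u+v\ge m$ separately. The claim that the other class has the same optimum follows from the reflection $x\mapsto n-1-x$, which swaps the colours when $n$ is even.
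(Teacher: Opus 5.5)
Your framework (lift a reduced vector to get $L_{\mathrm{mono}}\le\min\Phi$; average an optimal dual over the colour-preserving group $\Gamma$ for the reverse inequality; zero the diagonals that miss $C_\eps$; enumerate line orbits) is exactly the paper's orbit-averaging proof. Your even-case analysis is also correct: the Klein four-group, row/column orbits of size $4$ as $a$, antidiagonals paired by the half-turn as $b$, the $x-y$ family with its singleton orbit at offset $0$ as $c$, and folding of the row index.

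The concrete error is the variable identification for odd $n$. There the statement uses $a$ for the \emph{diagonal} orbits and $b$ for the row/column orbits, which is the opposite of the even case. Your fat-case paragraph instead assigns the diagonal orbits to $b$, and hence rows/columns to $a$. Take the representative $(x,y)=(u+v,u-v)$, which lies on row $u-v$, column $u+v$ and diagonals $x+y=2u$, $x-y=2v$. With your assignment it yields $a_{u+v}+a_{u-v}+b_u+b_{m-v}\ge1$. That is the stated system with $a$ and $b$ interchanged, not the stated system, so the index-matching check you defer fails as set up. In the fat case this is only a renaming, because $\Phi^{\mathrm{fat}}_m$ is symmetric under $a\leftrightarrow b$. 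But your thin-case paragraph silently switches to the other convention (``removes $a_m$'', ``$b$ re-indexed by $u+v+1$ and $u-v$''). So the proposal is inconsistent precisely where the objective is not symmetric.

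\textbf{Fat case, corrected.} For odd $n$, index the diagonal orbit $\{x-y=\pm2j,\ x+y=2m\pm2j\}$ by $a_{m-j}$, and the orbit of row/column $k$ by $b_{\min(k,2m-k)}$. Set $u=(x+y)/2$, $v=(x-y)/2$, so the domain $0\le v\le u$, $u+v\le m$ is the corner triangle $0\le y\le x\le m$. Then $x+y=2u$ gives $a_u$, $x-y=2v$ gives $a_{m-v}$, and the row and column give $b_{u-v}$ and $b_{u+v}$.

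\textbf{Thin case, corrected.} State what actually changes. The diagonals meeting $C_1$ have odd offsets, so there is no offset-$0$ orbit. All $m$ orbits $\{x-y=\pm(2v+1),\ x+y=2m\mp(2v+1)\}$ have size $4$, giving $a_0,\dots,a_{m-1}$ with coefficient $8$. The central row/column orbit still meets $C_1$ and survives as $b_m$ with coefficient $4$; your remark that it carries ``no constraint-relevant asymmetry'' should be replaced by this. Since $C_1$ misses the line $x=y$, the fundamental domain becomes $0\le y<x\le m$, parametrised by $(x,y)=(u+v+1,u-v)$ with $u+v\le m-1$. This gives $a_u+a_{m-v-1}+b_{u+v+1}+b_{u-v}\ge1$.

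With this correction your argument coincides with the paper's.
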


\begin{proof}
The proof is just orbit averaging in the dual.  The objective and constraints of the generic dual in
Proposition~\ref{prop:lp-four-direction} are invariant under every symmetry of the square that
preserves the chosen colour class and the four permitted line families.  Averaging a feasible dual
weighting over this symmetry group keeps it feasible and leaves the objective unchanged.  Hence an
optimum can be chosen constant on line orbits.  Conversely, any feasible reduced weighting defines a
feasible weighting for the original dual by assigning the displayed value to every line in the
corresponding orbit.  Thus the reduced and unreduced optima agree.

For the odd-fat case, the full dihedral group $D_4$ preserves $C_0$.  Using the folded coordinates
\[
(x,y)=(u+v,u-v),\qquad 0\le v\le u,\qquad u+v\le m,
\]
every point of the fat class has a representative in this triangle.  The two slope-$\pm1$ diagonal
families form one orbit type, represented by $a_0,\dots,a_m$, and the row/column families form a
second, represented by $b_0,\dots,b_m$.  The point represented by $(u,v)$ is covered by the four orbit
weights
\[
a_u,\qquad a_{m-v},\qquad b_{u+v},\qquad b_{u-v},
\]
which gives the displayed constraint.  The coefficient of each variable in the reduced objective is
twice the size of the corresponding line orbit, because the generic dual objective has the prefactor
$2$.  Thus the noncentral orbits have coefficient $8$ and the central orbits have coefficient $4$.
For example, when $n=7$ and $(u,v)=(2,1)$, the representative point is $(x,y)=(3,1)$, and the
constraint reads $a_2+a_2+b_3+b_1\ge1$.

For the odd-thin class the same averaging group is available, but the fundamental representatives are
shifted by half a layer:
\[
(x,y)=(u+v+1,u-v),\qquad 0\le v\le u,
\qquad u+v\le m-1.
\]
This accounts for the indices $m-v-1$ and $u+v+1$ in the constraint.  There is no central diagonal
orbit of the same type as in the fat case, so the $a$-sequence has length $m$, while the row/column
sequence still contains the central term $b_m$ with orbit coefficient $4$.

For even side length, a quarter-turn swaps the two colour classes, so the averaging group is smaller.
In the coordinates
\[
u=\frac{x+y}{2},\qquad v=\frac{y-x}{2},
\]
a fixed colour class becomes the integer diamond
\[
0\le u\le 2m-1,
\qquad
|v|\le \min(u,2m-1-u).
\]
After folding by the residual colour-preserving symmetries, one may take
$0\le v\le u\le m-1$.  The two oblique line families remain in one orbit type, represented by
$a_0,\dots,a_{m-1}$.  The $u$- and $v$-families remain distinct and are represented by
$b_0,\dots,b_{m-1}$ and $c_0,\dots,c_{m-1}$.  This gives the constraint
\[
a_{u-v}+a_{\min(u+v,\,2m-1-u-v)}+b_u+c_v\ge1.
\]
Again the objective coefficients are twice the corresponding line-orbit sizes, giving coefficients
$8$, $4$, and $2$ exactly as displayed.
\end{proof}
\FloatBarrier

The proposition is a structural compression of the dual.  It has the same optimum as the original
four-direction dual.  Its value is that it exposes the one-dimensional profile structure seen in the
computed optima; this profile structure is empirical evidence, not an additional theorem.  In the
odd-fat case the finite optimal profiles show, on observed active ranges, the
interior curvature relation
\[
\Delta^2 a_i+2\,\Delta^2 b_j\approx0,
\]
suggested by second-differencing tight constraints in the active region.  Representative
second-difference data are recorded in Appendix~\ref{app:profile-curvature}.  Computations in the
thin and even cases show similar profile patterns, with the shifted indexing in the thin case and the
extra $c$-profile in the even case.

The computed reduced LPs are consistent with the same candidate limiting slope.  Table~\ref{tab:odd-fat-convergence}
shows regenerated odd-fat values at larger sizes.  These values are already very close to the continuum certificate value \(\alpha\).
Like the finite exact-search and LP values in Table~\ref{tab:lp-smalln}, these computations are used as
motivation and are not part of the companion certificate package.

\begin{table}[t]
\centering
\caption{Numerical evidence for the odd-fat reduced LP.  Here $n=2m+1$ and the comparison value is the continuum certificate value $\alpha$.}
\label{tab:odd-fat-convergence}
\small
\begin{tabular}{rrr r}
\toprule
$m$ & $n$ & $L_m^{\mathrm{fat}}/n$ & $(L_m^{\mathrm{fat}}/n)-\alpha$ \\
\midrule
 40 &  81 & $1.576420575749053$ & $-0.000402821124755$ \\
 80 & 161 & $1.576808190150374$ & $-0.000015206723434$ \\
120 & 241 & $1.576771111476259$ & $-0.000052285397549$ \\
160 & 321 & $1.576820185861457$ & $-0.000003211012351$ \\
\bottomrule
\end{tabular}
\end{table}

Complementary computations in the thin and even cases give a similar numerical picture.  For example,
at sizes around $n\approx160$ one finds
\[
\frac{L_{\mathrm{mono}}(160,0)}{160}\approx 1.576757,\qquad
\frac{L_{\mathrm{mono}}(161,0)}{161}\approx 1.576808,\qquad
\frac{L_{\mathrm{mono}}(161,1)}{161}\approx 1.576704.
\]
Thus the reduced LP computations suggest a common limiting ratio with candidate value $\alpha$.
The continuum computation in the next subsection supplies an exact upper-bound certificate at this
candidate value.  With this value of $\alpha$, the finite computations motivate two related conjectures,
which are deliberately separated by strength.

\begin{conjecture}[Four-direction LP asymptotics]\label{conj:lp-asymptotics}
Let $\alpha$ be the middle real root of
\[
401\alpha^3-1744\alpha^2+2240\alpha-768=0.
\]
For odd $n=2m+1$, let $L_m^{\mathrm{fat}}$ and $L_m^{\mathrm{thin}}$ denote the relaxation
optima on the fat and thin colour classes, respectively.  For even $n=2m$, let
$L_m^{\mathrm{even}}$ denote the relaxation optimum on either colour class.  Then the three symmetry types
have the common limiting ratio
\[
\lim_{m\to\infty}\frac{L_m^{\mathrm{fat}}}{2m+1}
=
\lim_{m\to\infty}\frac{L_m^{\mathrm{thin}}}{2m+1}
=
\lim_{m\to\infty}\frac{L_m^{\mathrm{even}}}{2m}
=
\alpha.
\]
\end{conjecture}

\begin{conjecture}[Monochromatic NTIL asymptotics]\label{conj:ntil-asymptotics}
The exact monochromatic NTIL maxima satisfy
\[
\lim_{n\to\infty}\frac{D_{\mathrm{mono}}(n)}{n}=\alpha,
\]
with the same constant $\alpha$ as in Conjecture~\ref{conj:lp-asymptotics}.
\end{conjecture}

Conjecture~\ref{conj:lp-asymptotics} is the LP asymptotic suggested by the large finite relaxations.
Conjecture~\ref{conj:ntil-asymptotics} is stronger and more speculative: it does not follow from
Conjecture~\ref{conj:lp-asymptotics}, because the four-direction LP is only an upper-bound relaxation
of the exact NTIL problem.  Exact computation of $D_{\mathrm{mono}}(n,\eps)$ is currently available for the
small range shown in Table~\ref{tab:lp-smalln}.  In the next subsection we prove the continuum
statement that supports the constant itself, namely a feasible odd-fat continuum dual pair of
objective value $\alpha$.

\subsection{The odd-fat continuum certificate}\label{subsec:odd-fat-continuum}

We now isolate the continuum calculation suggested by the odd-fat reduced dual LP.  This subsection
gives an explicit feasible dual certificate for the formal continuum problem obtained from the scaled
odd-fat reduced dual LP and computes the certificate objective value exactly.

\subsubsection*{The continuum dual problem}

Let $n=2m+1$ and consider the fat colour class $C_0$.  Passing formally, not as a proved
continuum-to-discrete limit theorem, to the scaling $u/m\to x$, $v/m\to y$, the odd-fat reduced dual
program in Proposition~\ref{prop:symmetry-reduced-duals} leads to the following continuum dual
problem.  Define
\[
\Lambda_{\mathrm{fat}}:=\inf 4\int_0^1 \bigl(A(t)+B(t)\bigr)\,dt,
\]
where the infimum is over nonnegative functions $A,B$ on $[0,1]$ satisfying
\[
F(x,y):=A(x)+A(1-y)+B(x+y)+B(x-y)\ge 1
\]
on the triangle
\[
T:=\{(x,y):0\le y\le x,\ x+y\le 1\}.
\]
The theorem below gives an explicit feasible pair $(A,B)$ and hence the upper bound
$\Lambda_{\mathrm{fat}}\le\alpha$.

\subsubsection*{The certificate}

Let $p$ be the real root in the isolating interval
\[
\frac{2115883}{10^7}<p<\frac{2115884}{10^7}
\]
of
\begin{equation}\label{eq:p-cubic}
401p^3-331p^2+19p+7=0.
\end{equation}
Set
\begin{equation}\label{eq:continuum-breakpoints}
c=\frac{187p^3-211p^2+61p-5}{2(71p^2-66p+11)},\qquad
 d=1+p-c,
\end{equation}
\[
e=\frac{2c+3p-1}{4},\qquad
f=\frac{-2c+5p+1}{4},\qquad
g=\frac{1-3p+2c}{2}.
\]
For these breakpoints define
\begin{align*}
\mathcal C={}&-c^2+\frac12ce+\frac12cf+d^2-\frac12de-\frac12df
-2d-g^2+2p^2+1,\\
\mathcal D={}&-c-d-2g+4p,\\
\mathcal E={}&-2c^2+ce+cf-\frac12ef-\frac12e-\frac12f-\frac12g^2+2p^2,\\
\mathcal H={}&-2c-g+4p-1.
\end{align*}
Let $K$ and $r$ be the solution of
\begin{equation}\label{eq:Kr-equations}
K\mathcal C+r\mathcal D=1,
\qquad
K\mathcal E+r\mathcal H=1.
\end{equation}
The exact sign checks recorded in the certificate package give \(K>0\) and \(r<0\).
Now put
\begin{equation}\label{eq:continuum-coefficients}
 s=-\frac K2g^2-rg,
 \qquad
 \ell=-\left(\frac K2(e+f)+r\right),
 \qquad
 q=-\frac K2ef-\frac K2g^2-rg,
\end{equation}
\[
n_1=Kp^2+2rp,
\]
\[
\nu=\frac{-2Kc^2+Kce+Kcf+2Kp^2-2cr+4pr}{2},
\]
and
\[
\begin{aligned}
n_2={}&\frac{1}{2}\bigl(-2Kc^2+Kce+Kcf+2Kd^2-Kde-Kdf-4Kd\\
&\qquad\qquad{}+2Kp^2-2cr-2dr+4pr\bigr).
\end{aligned}
\]
Define the elementary pieces
\begin{equation}\label{eq:continuum-pieces}
B_Q(t)=\frac K2t^2+rt+s,
\qquad
B_L(t)=-\ell t+q,
\end{equation}
\[
A_1(t)=-Kt^2-2rt+n_1,
\qquad
A_L(t)=\ell t+\nu,
\qquad
A_2(t)=-Kt^2+2Kt+n_2,
\]
and finally the two functions
\begin{equation}\label{eq:continuum-functions}
A(t)=
\begin{cases}
0, & 0\le t\le p,\\
A_1(t), & p\le t\le c,\\
A_L(t), & c\le t\le d,\\
A_2(t), & d\le t\le 1,
\end{cases}
\end{equation}
\[
B(t)=
\begin{cases}
B_Q(t), & 0\le t\le e,\\
B_L(t), & e\le t\le f,\\
B_Q(t), & f\le t\le g,\\
0, & g\le t\le 1.
\end{cases}
\]
The two quadratic pieces of $B$ are thus parts of the same parabola.  Figure~\ref{fig:continuum-profiles}
shows the resulting dual profiles on the unit interval.

\begin{figure}[tb]
\centering
\includegraphics[width=0.74\linewidth]{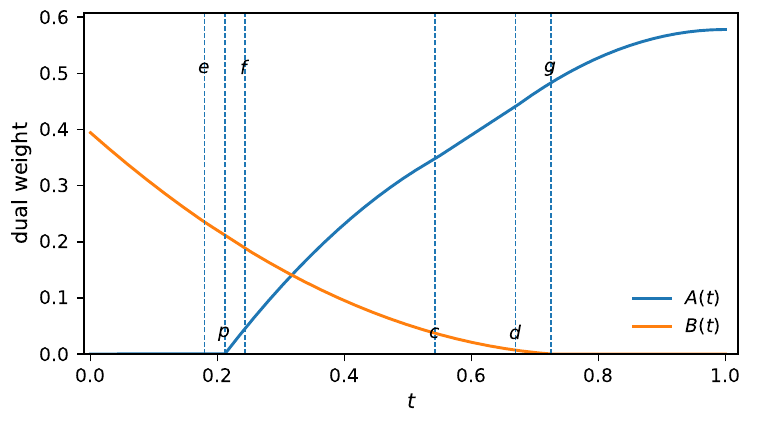}
\caption{The continuum dual profiles $A$ and $B$ on $[0,1]$.  The zero intervals and the breakpoints
$p,c,d,e,f,g$ show the piecewise-profile pattern suggested by the finite dual profiles.  The function
$A$ is zero up to $p$, while $B$ vanishes after $g$; between the breakpoints the pieces are quadratic
or linear as in \eqref{eq:continuum-functions}.}
\label{fig:continuum-profiles}
\end{figure}

The verification of the certificate has three parts.  First, endpoint and sign inequalities in the
cubic field show that the pieces define nonnegative functions $A$ and $B$ on $[0,1]$;
Figure~\ref{fig:continuum-profiles} is a visualisation of this certified pair.  Second, the obstacle slack
\[
G(x,y)=A(x)+A(1-y)+B(x+y)+B(x-y)-1
\]
is checked on the subdivision shown in Figure~\ref{fig:continuum-domain}.  On each cell it is
quadratic, so nonnegativity follows from the Bernstein coefficient certificate.  Finally, direct
integration gives the objective value $\alpha$.

\begin{theorem}[Odd-fat continuum dual certificate]\label{thm:odd-fat-continuum-certificate}
The functions $A$ and $B$ defined in \eqref{eq:continuum-functions} are feasible for the continuum dual
problem.  Consequently
\[
\Lambda_{\mathrm{fat}}\le \alpha,
\]
where $\alpha$ is the middle real root of
\[
401\alpha^3-1744\alpha^2+2240\alpha-768=0.
\]
\end{theorem}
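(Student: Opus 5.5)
The proof is a direct verification in three stages, all carried out in the cubic field $\QQ(p)$ with $p$ fixed by \eqref{eq:p-cubic} and its isolating interval.

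\textbf{Breakpoints, coefficients and nonnegativity.} I first express every breakpoint $c,d,e,f,g$ and every coefficient $K,r,s,\ell,q,n_1,\nu,n_2$ as an element of $\QQ(p)$. The linear system \eqref{eq:Kr-equations} is solved by Cramer's rule, with the determinant $\mathcal C\mathcal H-\mathcal D\mathcal E$ certified nonzero. Each element is reduced modulo the cubic, and every sign claim is checked by interval evaluation on the isolating interval, refining if needed. This gives the ordering
\[
0<e<f<g\le p<c<d<1,
\]
or whatever ordering the construction actually uses, together with $K>0$ and $r<0$.

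Next I check continuity at each breakpoint. Here $A_1(p)=0$ is exactly the definition of $n_1$, and $\nu$ and $n_2$ are defined to match $A_1(c)=A_L(c)$ and $A_L(d)=A_2(d)$. Likewise $s$ gives $B_Q(g)=0$, and $\ell,q$ make $B_L$ agree with $B_Q$ at $e$ and $f$; this is the secant of the parabola.

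For nonnegativity, $A_1$ and $A_2$ are concave since $K>0$, so they are nonnegative on their intervals once they are nonnegative at the endpoints. $A_L$ is linear, so endpoint checks suffice there too. $B_Q$ is convex with vertex at $-r/K>0$. I need $B_Q\ge 0$ on $[0,e]\cup[f,g]$: either the vertex lies at or beyond $g$, so that $B_Q$ is decreasing to $B_Q(g)=0$, or I check the vertex value directly. The secant $B_L$ lies above $B_Q$ and is therefore also nonnegative.

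\textbf{Obstacle inequality.} I cut $T$ by all lines on which one of the four arguments $x$, $1-y$, $x+y$, $x-y$ crosses a breakpoint. The result is a finite polygonal subdivision of $T$ whose vertices lie in $\QQ(p)$, refined into triangles. On each triangle, $G$ is a single polynomial of degree at most $2$. I write $G$ in the quadratic Bernstein basis of the triangle, whose six coefficients are computed exactly in $\QQ(p)$. If all six are nonnegative, then $G\ge0$ on that cell.

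I expect this to be the main obstacle. On the active cells, where the construction has $G\equiv 0$ or $G$ is tangent to zero, the coefficients vanish or are of indefinite sign. There I plan to use the identities $K\mathcal C+r\mathcal D=1$ and $K\mathcal E+r\mathcal H=1$ to show that $G$ vanishes identically or equals an explicit nonnegative square, such as $K(\cdot)^2/2$, coming from the matched curvatures: the $-K$ in $A$ against $K/2$ doubled in $B$, which reflects $\Delta^2a+2\Delta^2b\approx0$. On any remaining cell with a negative coefficient I will subdivide further by degree elevation or bisection until all coefficients are certified nonnegative.

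\textbf{Objective value.} I integrate the pieces exactly:
\[
4\int_0^1 (A+B)\,dt=4\Bigl[\int_p^c A_1+\int_c^d A_L+\int_d^1 A_2+\int_0^e B_Q+\int_e^f B_L+\int_f^g B_Q\Bigr].
\]
This is an element of $\QQ(p)$. I compute its minimal polynomial, via the resultant with the cubic \eqref{eq:p-cubic}, and verify that it is $401\alpha^3-1744\alpha^2+2240\alpha-768$. I then locate it by interval arithmetic as the middle real root. Feasibility together with this value gives $\Lambda_{\mathrm{fat}}\le\alpha$.
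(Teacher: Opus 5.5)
Your proposal follows the paper's proof essentially step for step: nonnegativity from $K>0$, $r<0$, concavity of $A_1,A_2$, convexity of $B_Q$ with vertex beyond $g$, and exact endpoint sign checks in $\QQ(p)$; the obstacle inequality by cutting $T$ along $x=p,c,d$, $1-y=p,c,d$, $x\pm y=e,f,g$, triangulating, and checking quadratic Bernstein coefficients; and exact integration reduced in $\QQ(p)$. Two small corrections: the actual ordering is $e<p<f<c<d<g$, not $g\le p$ (only $0<p<c<d<1$ and $0<e<f<g<1$ are needed anyway), and the indefinite coefficients you feared never occur, because each of the 240 coefficients is either exactly $0$ in $\QQ(p)$ or strictly positive ($G\equiv0$ on the active patches by $Q=R=0$), so no tangency argument or further subdivision is required.
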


The formulas above are the certificate used in the proof.  Its content is the feasible-pair upper bound
for the continuum problem; the discrete asymptotic conjectures remain separate.
Appendix~\ref{app:certificate-derivation} gives the active-patch and stationarity calculation leading to
these formulae.  The feasibility proof below uses only the displayed definitions and the exact algebraic
sign checks.

\begin{figure}[tb]
\centering
\begin{tikzpicture}[scale=5.0]
  \coordinate (O) at (0,0);
  \coordinate (A) at (1,0);
  \coordinate (B) at (1,1);
  \draw[thick] (O) -- (A) -- (B) -- cycle;

  \draw[dashed] (0.180,0) -- (0.180,0.180);
  \draw[dashed] (0.243,0) -- (0.243,0.243);
  \draw[dashed] (0.725,0) -- (0.725,0.725);
  \draw[dashed] (0.180,0.180) -- (1,0.180);
  \draw[dashed] (0.243,0.243) -- (1,0.243);
  \draw[dashed] (0.725,0.725) -- (1,0.725);

  \draw[dashed] (0.423,0) -- (0.212,0.212);
  \draw[dashed] (1,0.085) -- (0.542,0.542);
  \draw[dashed] (1,0.339) -- (0.669,0.669);

  \draw[dashed] (0.661,0) -- (1,0.339);
  \draw[dashed] (0.915,0) -- (1,0.085);

  \node[below] at (0.5,-0.04) {$v=0$};
  \node[right] at (1.02,0.5) {$u=1$};
  \node[above left] at (0.88,0.88) {$v=u$};

  \node[below] at (0.180,-0.02) {$e$};
  \node[below] at (0.243,-0.02) {$f$};
  \node[below] at (0.725,-0.02) {$g$};
  \node[above left] at (0.212,0.212) {$p$};
  \node[above left] at (0.542,0.542) {$c$};
  \node[above left] at (0.669,0.669) {$d$};
  \node[right] at (1.01,0.085) {$2c-1$};
  \node[right] at (1.01,0.339) {$2d-1$};
\end{tikzpicture}
\caption{Subdivision of the continuum triangle after the change of variables $u=x+y$ and $v=x-y$, so that $T$ becomes $0\le v\le u\le1$.  The dashed lines show the breakpoint families $u=e,f,g$, $v=e,f,g$, $u+v=2p,2c,2d$, and $u-v=2(1-d),2(1-c)$ used in the Bernstein-basis obstacle check.  The drawing is to scale using decimal approximations of the exact breakpoints.}
\label{fig:continuum-domain}
\end{figure}

\subsubsection*{Proof of the certificate}

\begin{proof}[Proof of Theorem~\ref{thm:odd-fat-continuum-certificate}]
First, the displayed breakpoints satisfy
\[
0<p<c<d<1,
\qquad
0<e<f<g<1.
\]
Moreover $K>0$, $r<0$, and the exact sign checks in the same cubic field give
\[
A_1(c)>0,
\qquad
A_L(d)>0,
\qquad
A_2(1)>0,
\]
\[
B_Q(0)>0,
\qquad
B_Q(e)=B_L(e)>0,
\qquad
B_L(f)=B_Q(f)>0.
\]
Since $A_1$ and $A_2$ are concave on their intervals, $A_L$ and $B_L$ are linear, and $B_Q$ is
convex with zeros at $g$ and $-2r/K-g>g$, these endpoint inequalities prove
$A(t),B(t)\ge0$ on $[0,1]$.

It remains to prove the obstacle inequality
\[
G(x,y):=F(x,y)-1\ge0
\]
on $T$.  Put
\[
u=x+y,
\qquad
v=x-y.
\]
Then
\[
x=\frac{u+v}{2},
\qquad
1-y=\frac{2-u+v}{2},
\]
and the triangle $T$ becomes
\[
0\le v\le u\le 1.
\]
The breakpoints of the four arguments of $A$ and $B$ cut this triangle by the lines
\[
u=e,f,g,
\qquad
v=e,f,g,
\]
\[
u+v=2p,2c,2d,
\qquad
u-v=2(1-d),2(1-c),
\]
together with the boundary lines $v=0$, $v=u$ and $u=1$.  The line $u-v=2(1-p)$ lies outside the
triangle and plays no role.  These lines produce 24 nonempty polygonal cells.  On each cell, $G$ is a
polynomial of degree at most two in $(u,v)$.

Triangulate each polygonal cell by a fan from one vertex.  This gives 40 triangles.  On a triangle with
barycentric coordinates $(\lambda_0,\lambda_1,\lambda_2)$, write the quadratic polynomial $G$ in the
degree-two Bernstein basis,
\[
G=\sum_{i+j+k=2} b_{ijk}
\frac{2!}{i!j!k!}\lambda_0^i\lambda_1^j\lambda_2^k .
\]
If all Bernstein coefficients $b_{ijk}$ are nonnegative, then $G$ is nonnegative on the whole triangle
\cite{farouki2012}.  For a quadratic these coefficients are obtained from the three vertex values and
the three edge-midpoint values.
\[
b_{200}=G(V_0),
\qquad
b_{020}=G(V_1),
\qquad
b_{002}=G(V_2),
\]
and, for example,
\[
b_{110}=2G\left(\frac{V_0+V_1}{2}\right)
-\frac{G(V_0)+G(V_1)}{2},
\]
with the analogous formulae for $b_{101}$ and $b_{011}$.

All vertices of the 40 triangles have coordinates in $\QQ(p)$, where $p$ is the root specified
in \eqref{eq:p-cubic}.  Hence all Bernstein coefficients lie in the same cubic field.  Reducing the
240 coefficients modulo the cubic gives 102 identically zero coefficients and 138 nonzero coefficients.
The file \texttt{bernstein\_coefficients.tsv} in the companion certificate package records each coefficient as
\[
c_0+c_1p+c_2p^2\qquad(c_i\in\QQ).
\]
A Sturm-sequence computation~\cite{basu_pollack_roy2006} isolates the required root in
\[
\frac{2115883}{10^7}<p<\frac{2115884}{10^7}.
\]
On this interval, exact rational interval evaluation shows that all 138 nonzero coefficients are positive.
Thus every Bernstein coefficient is nonnegative on every triangle in the decomposition, and therefore
$G\ge0$ on $T$.

Direct integration of the functions in \eqref{eq:continuum-functions}, followed by reduction in
$\QQ(p)$, gives the objective value $4\int_0^1(A(t)+B(t))\,dt=\alpha$; the algebraic derivation of this
identity is recorded in Appendix~\ref{app:certificate-derivation}.  Therefore
$\Lambda_{\mathrm{fat}}\le\alpha$, as claimed.
\end{proof}

Theorem~\ref{thm:odd-fat-continuum-certificate} proves the continuum upper bound
$\Lambda_{\mathrm{fat}}\le\alpha$.  Together with the reduced finite LP data, it makes $\alpha$ a
plausible candidate for Conjecture~\ref{conj:lp-asymptotics}.  A proof of the discrete LP asymptotic
would require a continuum-to-discrete approximation theorem, or an independent family of finite dual
certificates with objective $\alpha n+o(n)$.  Relating the LP asymptotic to the exact monochromatic
NTIL optimum remains a separate problem.

\appendix

\section{Boundary-forcing heuristic}\label{app:boundary-forcing}

This appendix records the local forcing argument mentioned in Remark~\ref{rem:forcing-bound}.  It is included
to orient the expected sharpened bound, not as a proof of that bound.  The main LP and continuum
results are independent of this heuristic forcing argument.

Attaining the diagonal-capacity bound $2n-2$ would force every relevant diagonal in one slope family to
be saturated.  Suppose, for example, that the saturated family is the family of slope $+1$ diagonals and that
the two outer singleton diagonals are the opposite corners $(0,n-1)$ and $(n-1,0)$.  The other orientation is
obtained by reflection.

The two occupied corners already fill the slope-$-1$ diagonal $x+y=n-1$.  Hence the midpoint of the
neighbouring length-three diagonal is forbidden, so saturation forces the two endpoints $(0,n-3)$ and
$(2,n-1)$.  The same argument from the opposite corner forces $(n-3,0)$ and $(n-1,2)$.  These new endpoints
fill the slope-$-1$ diagonals $x+y=n-3$ and $x+y=n+1$.  On the next slope-$+1$ diagonal, all three interior
points are now forbidden by these three full slope-$-1$ diagonals, so saturation forces the two endpoints
$(0,n-5)$ and $(4,n-1)$.  This creates three points in the top row, a contradiction.

The case in which the outer diagonals have length two is the same local picture with the first step shifted
by half a layer.  This is the heuristic reason to expect that $2n-2$ points are impossible for
$n\ge 6$.

For $2n-3$ points there is exactly one unit of missing capacity in each diagonal family.  If the defect in a
chosen slope family lies away from the first few boundary diagonals used above, the same forcing chain runs
unchanged.  If both defects are boundary defects, the two slope families affect different pairs of corners,
so at least one corner still supports an unbroken forcing chain.  Starting there again forces three points in
a row or column.  This is the heuristic reason to expect that $2n-3$ points are impossible as well.

\section{Finite odd-fat profile curvature data}\label{app:profile-curvature}

This appendix records the finite-profile data referred to in Subsection~\ref{subsec:lp-reduced-cases}.
The data are not used in the proof of Theorem~\ref{thm:odd-fat-continuum-certificate}.  They document one
observed interior curvature pattern that motivated the piecewise quadratic-linear certificate.

\begin{table}[h]
\centering
\caption{Second-difference evidence for the odd-fat reduced dual profiles.  The windows are interior
active ranges of the finite optimal profiles.  The displayed averages are scaled by $m^2$.}
\label{tab:odd-fat-curvature}
\small
\begin{tabular}{rccrc r}
\toprule
$m$ & $a$ window & $m^2\operatorname{avg}\Delta^2 a$ & $b$ window & $m^2\operatorname{avg}\Delta^2 b$ & $\Delta^2 a/\Delta^2 b$ \\
\midrule
 40 & $30\ldots38$   & $-2.4794669146$ & $15\ldots22$ & $ 1.2397334573$ & $-2.0000000000$ \\
 80 & $60\ldots78$   & $-2.5053826581$ & $30\ldots44$ & $ 1.2526913290$ & $-2.0000000000$ \\
120 & $90\ldots118$  & $-2.4811188008$ & $45\ldots66$ & $ 1.2405594004$ & $-2.0000000000$ \\
160 & $120\ldots158$ & $-2.4686298381$ & $60\ldots88$ & $ 1.2343149191$ & $-2.0000000000$ \\
\bottomrule
\end{tabular}
\end{table}

\section{Derivation of the odd-fat continuum certificate}\label{app:certificate-derivation}

This appendix records the symbolic ansatz calculation that produced the breakpoints and coefficients
used in the continuum certificate of Subsection~\ref{subsec:odd-fat-continuum}.  The proof of
Theorem~\ref{thm:odd-fat-continuum-certificate} uses the resulting functions directly; the calculation
below explains how the active pieces were found.

The finite odd-fat LP profiles motivate the piecewise structure displayed in
\eqref{eq:continuum-functions}.  The function $A$ is zero near the lower endpoint, then quadratic,
linear, and quadratic again, while $B$ is quadratic, linear, quadratic, and then zero.  Continuity and the endpoint
conditions force the coefficient relations in \eqref{eq:continuum-coefficients}.  In particular,
\[
\nu=A_1(c)-\ell c,
\qquad
n_2=A_L(d)+Kd^2-2Kd.
\]
The quadratic active patch
\[
A_1(x)+A_2(1-y)+B_Q(x+y)+B_Q(x-y)=1
\]
contributes the scalar equation
\[
Q:=n_1+n_2+K+2s-1=0.
\]
The linear active patch
\[
A_L(x)+A_L(1-y)+B_L(x-y)=1
\]
contributes
\[
R:=\ell+2\nu+q-1=0.
\]
After substituting the coefficient relations, these two equations take the linear form
\[
Q=K\mathcal C+r\mathcal D-1,
\qquad
R=K\mathcal E+r\mathcal H-1,
\]
where $\mathcal C,\mathcal D,\mathcal E,\mathcal H$ are the expressions defined in the certificate.  Thus
\eqref{eq:Kr-equations} is exactly the condition $Q=R=0$.

The objective value of the piecewise pair is
\[
\begin{aligned}
\Phi=4\Bigl(&
\int_p^c A_1(t)\,dt+
\int_c^d A_L(t)\,dt+
\int_d^1 A_2(t)\,dt\\
&{}+
\int_0^e B_Q(t)\,dt+
\int_e^f B_L(t)\,dt+
\int_f^g B_Q(t)\,dt
\Bigr).
\end{aligned}
\]
Performing the integrations and substituting the coefficient relations gives
\[
\Phi=K\mathcal A+r\mathcal B,
\]
where
\[
\begin{aligned}
\mathcal A={}&
\frac83c^3-c^2e-c^2f-4c^2+2ce+2cf
-\frac83d^3+d^2e+d^2f+8d^2\\
&-2de-2df-8d-\frac13e^3+e^2f-ef^2+\frac13f^3
-\frac43g^3-\frac83p^3+4p^2+\frac83,
\end{aligned}
\]
and
\[
\mathcal B=2c^2-4c+2d^2-4d-2g^2-4p^2+8p.
\]
Introducing Lagrange multipliers $\lambda,\mu$ for the two constraints $Q=R=0$, the stationarity
equations for
\[
\mathcal L=\Phi+\lambda Q+\mu R
\]
are
\[
Q=0,
\qquad
R=0,
\]
\[
\mathcal A+\lambda\mathcal C+\mu\mathcal E=0,
\qquad
\mathcal B+\lambda\mathcal D+\mu\mathcal H=0,
\]
\[
(Kp+r)(2p-2-\lambda-\mu)=0,
\]
\[
(4c-\lambda-2\mu-4)(4Kc-Ke-Kf+2r)=0,
\]
\[
(4d-\lambda-4)(-4Kd+Ke+Kf+4K+2r)=0,
\]
\[
\begin{aligned}
0={}&2c^2-c\lambda-2c\mu-4c-2d^2+d\lambda+4d\\
&+2e^2-4ef+2f^2+f\mu+\mu,
\end{aligned}
\]
\[
\begin{aligned}
0={}&2c^2-c\lambda-2c\mu-4c-2d^2+d\lambda+4d\\
&-2e^2+4ef+e\mu-e-2f^2+\mu,
\end{aligned}
\]
and
\[
(Kg+r)(4g+2\lambda+\mu)=0.
\]
The branch used in the certificate is the nondegenerate branch
\[
\lambda+\mu=2p-2,
\qquad
\lambda+2\mu=4c-4,
\qquad
\lambda=4d-4,
\qquad
2\lambda+\mu=-4g.
\]
Equivalently,
\[
d=1+p-c,
\qquad
g=\frac{1-3p+2c}{2},
\qquad
\lambda=4(p-c),
\qquad
\mu=4c-2p-2.
\]
On this branch, adding and subtracting the two equations obtained by varying $e$ and $f$ gives
\[
(2p-e-f)(-2c+p+1)=0,
\]
and
\[
(e-f)(-2c+2e-2f+p+1)=0.
\]
The branch matching the certified breakpoint ordering is therefore
\[
e+f=2p,
\qquad
f-e=\frac{1+p-2c}{2},
\]
which gives the formulae for $e$ and $f$ used in the certificate.

It remains to solve for $p$ and $c$.  Substituting these branch relations into the stationarity
equations for $r$ and $K$ gives
\[
-4c^2-4cp+4c+13p^2-10p+1=0,
\]
and
\[
88c^3-36c^2p-36c^2-126cp^2+132cp-30c
+29p^3-57p^2+39p-3=0.
\]
Eliminating $c$ gives
\[
(p-1)^2(5p-1)(401p^3-331p^2+19p+7)=0.
\]
The ordering constraints single out the middle real root of \eqref{eq:p-cubic}.  Substitution into the
objective and exact reduction in $\QQ(p)$ gives
\[
\Phi=\alpha,
\]
where eliminating $p$ yields
\[
401\alpha^3-1744\alpha^2+2240\alpha-768=0,
\]
and the root compatible with the numerical value of the integral is the middle real root.

\section*{Computational reproducibility and certificate package}
The continuum feasibility part of Theorem~\ref{thm:odd-fat-continuum-certificate} is a finite exact
algebraic check in the cubic field $\QQ(p)$.  The companion directory \texttt{odd\_fat\_certificate/}
contains \texttt{README.md}, \texttt{constants.tsv}, \texttt{sign\_checks.tsv}, \texttt{triangles.tsv},
\texttt{bernstein\_coefficients.tsv}, and \texttt{verify\_odd\_fat\_certificate.py}.  These files record
the exact constants, the exact sign checks, the triangulation of the continuum domain, and the exact
Bernstein coefficients for the obstacle polynomial on each triangle.  The verification script regenerates
the certificate and uses exact rational arithmetic for the algebraic sign checks; it requires Python~3 and
\texttt{shapely} for the subdivision step.  The check is run from within
\texttt{odd\_fat\_certificate/} by executing \texttt{python verify\_odd\_fat\_certificate.py}.  In the regenerated certificate the 240 Bernstein coefficients
split into 102 identically zero coefficients and 138 positive coefficients, all represented as elements of
$\QQ(p)$.  The package verifies the continuum obstacle certificate; it does not certify the finite LP
values in Tables~\ref{tab:lp-smalln} and~\ref{tab:odd-fat-convergence}, the exact NTIL search data, or
the continuum-to-discrete conjectures.  The exact value of the integral is the symbolic calculation
recorded in Appendix~\ref{app:certificate-derivation}.

\section*{Declaration of generative AI and AI-assisted technologies}
During the preparation of this manuscript the author used OpenAI ChatGPT to assist with editorial
checks and \LaTeX{} organisation. The mathematical content was reviewed and edited by the author, who
takes full responsibility for the final version.

\end{document}